\documentclass[11pt]{article}
\usepackage{amsmath, amssymb, amscd, amsthm, amsfonts}
\usepackage{graphicx}
\usepackage{hyperref}
\usepackage{tikz,tikz-cd}
\usepackage[dvipsnames]{xcolor}
\usepackage[centertableaux,nobaseline]{ytableau}
\usepackage{mathtools}
\usepackage[shortlabels]{enumitem}

\DeclareMathOperator{\lft}{left}
\DeclareMathOperator{\up}{up}
\DeclareMathOperator{\id}{id}

\newcommand{\stack}[5]{ 
    \begin{tikzpicture}[scale=0.6]
        \draw[thick] (-0.5,0) -- (1,0) -- (1,-2) -- (2,-2) -- (2,0) -- (3.5,0);
        \node[fill = white, draw = white] at (2.8,.5) {$#5$};
        \node[fill = white, draw = white] at (1.5,-1.5) {$#4$};
        \node[fill = white, draw = white] at (1.5,-.8) {$#3$};
        \node[fill = white, draw = white] at (1.5,-.1) {$#2$};
        \node[fill = white, draw = white] at (.2,.5) {$#1$};
    \end{tikzpicture}
}

\newcommand{\arrow}{ 
    \!\!\!\!\!\!\!
    \begin{tikzpicture}[scale=0.6]
        \node at (0,0) {};
        \node at (0,0.7) {$\longrightarrow$};
    \end{tikzpicture}
    \!\!\!\!\!\!\!
}

\title{Motzkin Numbers Count 2-Stack-Sortable Permutations Ending in Their Least Entry}
\author{Ryota Inagaki \and Michael Luo}
\date{September 28, 2026}

\newtheorem{theorem}{Theorem}[section]

\newtheorem{lemma}[theorem]{Lemma}
\newtheorem{prop}[theorem]{Proposition}

\theoremstyle{remark}

\newtheorem*{remark*}{Remark}

\theoremstyle{definition}

\newtheorem{example}[theorem]{Example}

\begin{document}

\maketitle

\begin{abstract}
We prove the following conjecture of Zhang (arXiv:2604.10779, Conjecture 6.1): for $n \geq 0$, the number of $2$-stack-sortable permutations of $\{0,1,\dots,n\}$ ending in $0$ is the $n$th Motzkin number. By Zhang's result, there is a bijection between $2$-stack-sortable permutations ending in their least element and standard composition tableaux of width at most $2$. We then show bijectively that there are an equal number of these and standard Young tableaux of width at most $3$, which are known to be counted by the Motzkin numbers.
\end{abstract}

\section{Introduction}\label{section-introduction}

In 1990, West \cite{west} introduced the stack-sorting map $s$, which processes a permutation through a sorted stack by repeatedly pushing in an element if it is smaller than the topmost element in the stack and popping out the stack's topmost element otherwise. Figure \ref{fig:westmap} shows the action of $s$ on $3142$.

\begin{figure}[h]
    \begin{center}
        \stack{}{}{}{}{3142}\arrow
        \stack{}{}{}{3}{142}\arrow
        \stack{}{}{1}{3}{42}\arrow
        \stack{1}{}{}{3}{42}\arrow
        \stack{13}{}{}{}{42}\arrow
        \stack{13}{}{}{4}{2}\arrow
        \stack{13}{}{2}{4}{}\arrow
        \stack{132}{}{}{4}{}\arrow
        \stack{1324}{}{}{}{}
    \end{center}
    \caption{For the permutation $\pi = 3142 \in S_4$, we have $s(3142) = 1324$.}
    \label{fig:westmap}
\end{figure}

For a permutation $\pi$, its \emph{stack-sorting depth} $\langle \pi\rangle$ is the smallest number of applications of $s$ needed to sort $\pi$. As $s(3142) = 1324$ and $s(1324) = 1234$, the stack-sorting depth of $3142$ is $2$.

The problem of enumerating permutations $\pi$ with $\langle \pi\rangle \le t$ is open for general $t$. However, for $t$ small, there are some results. Let $S_n$ be the set of permutations of $\{1,\dots,n\}$. We say a permutation is $t$-stack-sortable if its stack-sorting depth is at most $t$.

Knuth \cite{knuth} showed the number of $1$-stack-sortable permutations in $S_n$ is the $n$th Catalan number $C_n$, and Zeilberger \cite{zeilberger} showed the number of $2$-stack-sortable permutations in $S_n$ is $\frac2{(n + 1)(2n + 1)}\binom{3n}n$. Defant \cite{3-stack-sortable} then found a polynomial-time algorithm to compute the number of $3$-stack-sortable permutations in $S_n$, but for depths $t \ge 4$, the problem is still open.

Let $S_n' = \{\pi\mathbf 0 \mid \pi \in S_n\}$ be the set formed by appending $0$ to each permutation in $S_n$. Knuth \cite{knuth} showed the $s$-sortable permutations are exactly the ones avoiding the pattern $231$. Therefore, the only $s$-sortable permutation in $S_n'$ is the decreasing one defined by $\pi = \pi_1\dots \pi_{n + 1}$ with $\pi_i = n + 1 - i$. Zhang \cite{tableau} introduced stack-sorting tableaux, which associate each $\pi \in S_n'$ with an antistandard composition tableau. Using these tableaux, he found for each $t$ a polynomial-time algorithm enumerating the $t$-stack-sortable permutations in $S_n'$. He conjectured $2$-stack-sortable permutations in $S_n'$ are counted by the Motzkin numbers, which is our main result. The Motzkin numbers (OEIS A001006 \cite{oeis}) count, among other things, the number of ways to draw any number of disjoint chords connecting $n$ points on a circle.
\begin{theorem}
    \label{thm:motzkin}
    The number of $2$-stack-sortable permutations in $S_n'$ is the $n$th Motzkin number $M_n$.
\end{theorem}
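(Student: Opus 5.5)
The plan follows the route announced in the abstract. We use Zhang's result as a black box, then build one explicit bijection. Zhang's stack-sorting tableaux give a bijection between $2$-stack-sortable permutations in $S_n'$ and standard composition tableaux with $n$ cells and at most two columns. (Zhang works with antistandard tableaux; complementing the entries turns these into standard ones, so we may work with standard composition tableaux.) The $n$th Motzkin number $M_n$ counts standard Young tableaux with $n$ cells and at most three columns. The classical proof sends such a tableau to a Motzkin path: step $k$ is an up step, a level step, or a down step according as $k$ lies in column $1$, $2$, or $3$. So it suffices to show that the number of standard composition tableaux of width at most $2$ with $n$ cells equals the number of standard Young tableaux of width at most $3$ with $n$ cells.

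The first step is to write out precisely what a standard composition tableau of width at most $2$ is. Its rows have length $1$ or $2$, with row lengths given by a composition. Entries increase along rows, the first column increases from top to bottom, and there is a triple condition. For two rows $i<j$ with entries $a_i,b_i$ (where $b_i$ may be absent) and $a_j,b_j$, the triple condition in width $2$ should reduce to a single implication: if $a_i<a_j$ then $b_j$ is absent, or $b_i$ is present and $b_j<b_i$, where an absent $b_j$ is treated as $0$.

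Next, encode each tableau as a word. Reading $k=1,\dots,n$ in order, record whether $k$ is the first entry of a row, the second entry of a row of length $2$, or something finer as needed. I expect a direct encoding to give a path with up steps (new rows), down steps (completing a row), and level steps (singleton rows that will never be completed). Since the first column is increasing, each $a$-entry opens a new row at the bottom. The triple condition then says that a row's second entry, when it arrives, must go to the open row determined by a stack-like rule. The key check is that this rule is forced: at each time, among the rows that are still open and may legally receive an entry, the triple condition should allow exactly one choice, namely the most recently opened one, in LIFO order. If so, the tableau is determined by a Motzkin path. This path has up steps for rows that will be completed, level steps for singleton rows, and down steps for second entries, with the marking of each row as "to be completed" or "singleton" fixed when the row is opened.

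The bijection with Young tableaux of width at most $3$ is then immediate: both are in bijection with Motzkin paths of length $n$. Alternatively, we can directly map column $1$, column $2$ and column $3$ of the Young tableau to row-openers, singletons and closers respectively. The main obstacle is the forcing claim. We must verify that the triple condition, together with the increasing first column, forces second entries to be matched in LIFO order (nested, never crossing) with their rows. We must also verify the converse: every assignment of this kind satisfies the triple condition, including the cases involving singleton rows lying between a row and its second entry. I would check this first by writing out the width-$2$ triple condition for all cases of pairs of rows, and by testing against the small counts $1,1,2,4,9,21$. If the matching turns out to be first-in-first-out rather than LIFO, I would instead replace Motzkin paths by the equivalent noncrossing chord diagrams, or by their nonnesting counterparts, which are also counted by $M_n$.
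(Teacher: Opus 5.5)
There is a genuine gap, and it starts with the definition. In this paper a standard composition tableau has no triple rule. ``Increasing'' means every row increases left to right and every column increases top to bottom, where a column is read straight down across gaps; this is why $\up(c)$ may cross a gap.

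Your width-$2$ condition is therefore not the right one, and it does not even give the right count. Since the first column increases, your condition forces all two-cell rows to lie above all one-cell rows and forces the second column to decrease. That gives $3$ and $5$ tableaux for $n=3,4$, instead of $M_3=4$ and $M_4=9$. As a result, the step you call the main obstacle, the forcing claim, is left unverified and is stated with the wrong rule (LIFO).

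The Young-tableau side is also wrong as stated. Sending the entries of columns $1,2,3$ to up, level and down steps does not produce Motzkin paths. The one-column tableau gives $n$ up steps, and in general the path ends at height (length of column $1$) minus (length of column $3$). So neither your ``classical proof'' nor your ``alternative direct map'' is a bijection. You do not need Young tableaux at all, though: Motzkin paths of length $n$ are counted by $M_n$ directly.

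With the correct definition, your encoding works immediately, with a queue in place of a stack. The first-column entries fill the rows in increasing order from the top, and the second-column entries fill the two-cell rows in increasing order from the top. Let step $k$ be $U$, $L$ or $D$ according as $k$ is the first entry of a two-cell row, the entry of a one-cell row, or a second-column entry. Then the $m$th $D$ lies in the same row as the $m$th $U$ (first-in-first-out). The only remaining condition is $a<b$ in each two-cell row. This says exactly that the $m$th $U$ precedes the $m$th $D$ for all $m$, i.e.\ that every prefix has at least as many $U$'s as $D$'s. Conversely, every Motzkin path gives a valid tableau; one-cell rows impose no constraint because the second column simply skips them. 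So $X_n$ is in bijection with Motzkin paths of length $n$, and there is no need to pass to nonnesting chord diagrams.

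The paper instead proves $\lvert X_n\rvert=\lvert Y_n\rvert$ through intermediate tableaux and the maps $f,g$ (Theorem \ref{thm:third}, Lemma \ref{lem:inverses}). It then invokes the known count of standard Young tableaux with at most three rows. Your repaired route replaces all of that with a few lines. The paper's route gains an explicit bijection to $Y_n$, which yours would give only after composing with a nontrivial bijection between Motzkin paths and such Young tableaux. Finally, if you derive the first step from Proposition \ref{pro:theory} rather than taking it as a black box, you need the observation from the proof of Lemma \ref{lem:first} that every second-column cell has hook-length $1$.
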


The paper is organized as follows. In Section \ref{sec:preliminaries}, we introduce notation for permutations, as well as composition and Young tableaux. We also introduce Zhang's stack-sorting tableaux, which are necessary for showing $2$-stack-sortable permutations in $S_n'$ are in bijection with standard composition tableaux of width at most $2$. In Section \ref{sec:proof}, we present the bijective arguments proving Theorem \ref{thm:motzkin}.

\section{Preliminaries}\label{sec:preliminaries}
\subsection{Permutations}

A \emph{permutation} of a totally ordered set is a sequence containing each element in that set exactly once. Let the unique permutation of the empty set be $\varepsilon$. We denote the concatenation of two permutations $u$ and $v$ by $uv$. For a one-digit integer $n$, we denote the unique permutation of $\{n\}$ by the digit $n$ written in boldface. For example, $(6,7) = \mathbf{67}$. Let $\widetilde S_n$ be the set of permutations $\pi$ of $\{0,\dots,n\}$ such that the subpermutation of $\pi$ after $0$ is sorted. Note that $S_n' \subseteq \widetilde S_n$, as $\varepsilon$ is sorted.

\subsection{Composition Tableaux and Young Tableaux}

We review some definitions about tableaux. A \emph{composition} $\alpha$ of a positive integer $n$ is a sequence $(\alpha_1,\dots,\alpha_k)$ such that the $\alpha_i$ are positive integers adding to $n$. A \emph{partition} is a composition where the elements are in weakly decreasing order.

We use the English convention that the diagram of a composition $\alpha = (\alpha_1,\dots,\alpha_k)$ is a set of cells formed by stacking $k$ left-aligned rows of length $\alpha_1,\dots,\alpha_k$ on top of each other, with $\alpha_1$ on top and $\alpha_k$ on bottom. By convention, we identify a composition $\alpha$ with its diagram and refer to both as $\alpha$.

For a diagram $\alpha$, a \emph{tableau} is a function $\tau: \alpha \to \mathbb N$. The tableau is said to have \emph{shape} $\alpha$. If $\alpha$ is a partition, we say a tableau of shape $\alpha$ is a Young tableau. A tableau is said to be \emph{increasing} or \emph{decreasing} if its rows and columns are strictly increasing or strictly decreasing, respectively, where rows are read left-to-right and columns are read top-to-bottom. A tableau with $n$ cells is said to be \emph{standard} (resp.\ \emph{antistandard}) if it contains each element of $\{1,\dots,n\}$ exactly once and it is increasing (resp.\ decreasing). For example,
\[\begin{ytableau}
    5 & 3 \\ 4 \\ 2 & 1
\end{ytableau}\]
is an antistandard composition tableau with shape $(2,1,2)$. Let $\mathcal T_n$ be the set of antistandard composition tableaux on $n$ cells.


\subsection{Stack-Sorting Tableaux}

For a cell $c$ in a composition tableau, let $\lft(c)$ be the cell directly to the left of $c$ and $\up(c)$ be the cell above $c$ in its same column, possibly crossing a gap, if these are defined. By convention, we identify a cell with its entry. In the above example, $\lft(\up(1)) = 5$. The \emph{width} of a tableau or a diagram is the length of its longest row. For a cell $c$ in a composition tableau, let its \emph{hook-length} $h(c)$ be the number of cells that satisfy all of the following: strictly left of $c$, weakly above $c$, and strictly below $\up(\lft(c))$. If $\up(\lft(c))$ does not exist, we drop that condition. This should not be confused with the usual hook-length of a cell $c$ in a Young tableau; we never use the usual hook-length in this paper.

\begin{example}
The hook-length of $1$ in the tableau below is $6$, as shown below.
\[\begin{ytableau}11 & 9 & 5 \\ *(Salmon) 10 & *(Salmon) 7 \\ *(Salmon) 8 \\ *(Salmon) 6 & *(Salmon) 3 & *(Salmon) 2 & *(Apricot) 1 \\ 4 \end{ytableau}\]
\end{example}

The following result summarizes the theory developed in Sections 3 and 4 of \cite{tableau}.
\begin{prop}
    \label{pro:theory}
    There exists a map $\theta$ from $S_n'$ to $\mathcal T_n$ such that
    \begin{itemize}
        \item the preimage of every $T \in \mathcal T_n$ has $\prod_ch(c)$ elements, where the product is taken over all cells $c$ not in the first column of $T$, and
        \item for all $\pi \in S_n'$, the width of $\theta(\pi)$ is the stack-sorting depth of $\pi$.
    \end{itemize}
\end{prop}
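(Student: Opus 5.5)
Since Proposition \ref{pro:theory} collects results from Sections 3 and 4 of \cite{tableau}, my plan is to rebuild $\theta$ directly from the dynamics of $s$ on the sets $\widetilde S_n$. The first step is to show that $s$ maps $\widetilde S_n$ into itself. Write $\pi \in \widetilde S_n$ as $\pi = u\mathbf 0 v$ with $v$ sorted. When $\mathbf 0$ enters the stack, every element still in the stack is popped only after $\mathbf 0$ leaves, because $0$ is smaller than everything. So $s(\pi)=s'(u)\,\mathbf 0\, w$, where $w$ is the stack content below $0$ merged with $v$. I would check that $w$ is again increasing.

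This suggests the following definition of $\theta$. Label each entry $j\in\{1,\dots,n\}$ by its \emph{arrival time} $a(j)$, the least $t$ such that $j$ lies after $0$ in $s^t(\pi)$. Place $j$ in column $a(j)$. The permutation is sorted exactly when every entry lies after $0$, so the width of the resulting tableau equals $\langle\pi\rangle$. This gives the second bullet once the tableau is well defined.

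The rows should record a ``parent'' relation. When $j$ arrives at time $t\ge 2$, it was popped because of a specific larger entry that arrived at time $t-1$. Concretely, this is the entry in whose stack-segment $j$ sat just before $\mathbf 0$ was pushed. I would place $j$ to the right of that parent. The first column then consists of entries arriving at time $1$, listed in decreasing order from top to bottom, and the remaining rows are ordered by the parents. Next I would verify that rows and columns strictly decrease, so that $\theta(\pi)\in\mathcal T_n$. Along rows this holds because a parent exceeds its child. Down columns it holds because entries arriving together leave the stack in the order forced by the sorted tail.

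For the first bullet I would count fibres column by column, undoing one application of $s$ at a time. Fix $T$ and the truncation $T_{\le k}$ to its first $k$ columns. The claim is that each partial preimage consistent with $T_{\le k}$ extends to exactly $\prod_c h(c)$ partial preimages consistent with $T_{\le k+1}$, the product being over the cells $c$ of column $k+1$. For a cell $c$ with $\lft(c)$ its parent, this means $c$ must be inserted immediately before one of the entries strictly left of $c$, weakly above it, and strictly below $\up(\lft(c))$. Those are exactly the $h(c)$ positions, and each choice yields a distinct preimage. The boundary condition ``drop the condition if $\up(\lft(c))$ does not exist'' should correspond to inserting into the leftmost segment.

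I expect this local count to be the main obstacle. One must show that the admissible insertion slots for different cells of the same column are independent, so that the counts multiply. One must also show that no other insertion preserves both the arrival times and the parent structure already recorded. My first attempt would be a careful case analysis of the stack at the moment $\mathbf 0$ is pushed, using the $231$-avoidance characterization of $s$-sortability from \cite{knuth} to pin down exactly which positions are forbidden.
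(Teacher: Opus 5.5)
Your construction of $\theta$ matches the description the paper gives, and the width bullet follows as you say. Column $t$ is $M(\rho_t)$, and a cell $c$ of column $t\ge 2$ sits to the right of the largest entry to its right in $\rho_{t-1}$, which is the entry that pops $c$ in pass $t-1$. (Your phrase about the stack segment ``just before $\mathbf 0$ was pushed'' is off: at that moment the stack holds only $M(\rho_{t-1})$.)

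Two small points are missing there. First, each column-$(t-1)$ cell must receive at most one child. This holds because writing $\rho_{t-1}=A_1m_1\cdots A_Km_K$, with $m_1>\dots>m_K$ the right-to-left maxima, gives $\rho_t=s(A_1)\cdots s(A_K)$, and $s(A_l)$ ends in $\max A_l$. Second, column monotonicity comes from the children appearing in $\rho_t$ in the order of their parents while decreasing as right-to-left maxima of $\rho_t$, not from the sorted tail.

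The real problem is the first bullet, which is the entire content of the proposition; the paper does not prove it but cites Sections 3 and 4 of \cite{tableau}. You assert that the admissible slots for a column-$(k+1)$ cell $c$ are exactly those immediately before the $h(c)$ cells in question, independently for all cells of that column. You then explicitly defer this as the main obstacle, so as it stands the proposal is a plan rather than a proof.

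To close the gap you need at least three things.

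(i) The recursion over truncations only makes sense if $\theta(\pi|_{\le k})=T_{\le k}$, where $\pi|_{\le k}$ keeps the entries of arrival time at most $k$. This is true but must be proved. If $y<x$ and $y$ precedes $x$ in $\rho_t$ with $x\notin M(\rho_t)$, then $y$ still precedes $x$ in $\rho_{t+1}$, so $a(y)>a(x)$. Hence a deleted entry is never the next greater element of a kept one, and deletion commutes with passing from $\rho_t$ to $\rho_{t+1}$.

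(ii) You need an invariant locating the cells of each row in $\rho_1,\dots,\rho_k$. Rows are not contiguous in $\pi$: for $\pi=\mathbf{25340}$, the cell $2$ of the row $(4,3,2)$ lies in the block $A_1$ of the row $(5)$. The invariant must be strong enough to show two things. Inserting $c$ before one of the $h(c)$ cells makes it land in $\rho_k$ strictly between $\up(\lft(c))$ and $\lft(c)$ and become a right-to-left maximum of $\rho_{k+1}$. Every other slot changes its arrival time or its parent.

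(iii) You must check that the insertions for different cells of column $k+1$ do not interfere with one another.

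The natural tool for (ii) and (iii) is the block decomposition $\rho_{t+1}=s(A_1)\cdots s(A_K)$, together with the fact that $s$ outputs each entry just before its next greater element is pushed. This lets you follow a slot of $\rho_1$ to a slot of $\rho_k$. The $231$-avoidance criterion does not by itself give this control.
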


For completeness, we provide a description of the map $\theta$ described in Proposition \ref{pro:theory}. We require the following lemma, whose proof is a specialization of the inductive step of the proof of Lemma 3.1 of \cite{disproof} to the case of $s$. We include the proof for completeness.

\begin{lemma}
    \label{lem:final-element}
    The set $\widetilde S_n$ is closed under $s$.
\end{lemma}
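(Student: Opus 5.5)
We need to show that if $\pi \in \widetilde S_n$, then $s(\pi)\in\widetilde S_n$, i.e.\ the entries after $0$ in $s(\pi)$ appear in increasing order. The plan is to use the standard recursive description of West's map: if $m$ is the largest entry of a permutation $\sigma = LmR$, then $s(\sigma) = s(L)\,s(R)\,m$. This identity follows directly from the stack description, since $m$ is never popped until the input is exhausted; everything in $L$ is pushed and popped above $m$ as if $m$ were the bottom of the stack, and then $R$ is processed on top of $m$. We then argue by strong induction on the length of the permutation, working more generally with permutations of any finite totally ordered set whose least element is called $0$ (or, more simply, with the property ``the entries after the minimum are increasing'').

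For the inductive step, write $\pi = L m R$ with $m = n$ the maximum, assuming $n \ge 1$ (the case $\pi=\mathbf 0$ is trivial). There are two cases according to the position of $0$. \emph{Case 1: $0$ lies in $R$.} Then $R = R' \mathbf 0 R''$ with $R''$ increasing. Also $m$ precedes $0$, and every entry after $0$ is smaller than $m$; moreover $L$ contains no $0$. By induction applied to $R$ (whose minimum is $0$, with the sorted tail $R''$), $s(R)$ has its post-$0$ part increasing. Then $s(\pi) = s(L)s(R)m$, and the part after $0$ is (post-$0$ part of $s(R)$) followed by $m$, which is increasing since $m$ is the maximum. \emph{Case 2: $0$ lies in $L$.} Then everything after $0$ in $\pi$, which includes $m$ and all of $R$, is increasing; since $m$ is the maximum and must be last among an increasing sequence, $R=\varepsilon$. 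So $\pi = Lm$ and $s(\pi) = s(L)\,m$. By induction, the entries of $s(L)$ after $0$ are increasing, and appending the maximum $m$ preserves this.

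The only genuine subtlety is making the induction hypothesis apply to subwords like $L$ and $R$, which are permutations of subsets rather than of $\{0,\dots,k\}$; I would handle this by phrasing the induction for arbitrary finite sets of integers containing $0$ as the minimum (or by standardizing, noting $s$ commutes with order-preserving relabeling). The main step to write carefully is the decomposition $s(LmR)=s(L)s(R)m$, which I would justify briefly from the stack algorithm rather than cite.
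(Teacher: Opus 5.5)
Your argument is correct, but it takes a different route from the paper. The paper's proof is direct and non-inductive. At the moment $0$ is popped, the remaining input is the old tail, which is increasing by hypothesis. The stack is increasing from top to bottom, which is an invariant of West's algorithm. From then on each step outputs the smaller of the next input entry and the top of the stack, so everything after $0$ is the increasing merge of these two sequences.

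You instead use the recursive identity $s(LmR)=s(L)\,s(R)\,m$ and induct on length, splitting on which side of the maximum $0$ lies. Both cases check out. The key observation in Case 2 is that $R=\varepsilon$. In each case the word you apply the induction hypothesis to contains $0$, so it is never empty.

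What each route buys:
\begin{itemize}
\item Yours avoids tracking intermediate stack states, at the cost of generalizing to arbitrary finite alphabets and a case split.
\item The paper's is shorter and says more. It identifies the new tail as the sorted union of the old tail with the stack contents when $0$ enters, i.e.\ the right-to-left maxima of $\rho$. That is exactly the fact used immediately afterward to build the columns $M(\rho_i)$ of the stack-sorting tableau.
\end{itemize}

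One correction to your sketch of the decomposition. The entries of $L$ are not processed ``above $m$''. They are processed before $m$ arrives, and when $m$ arrives every entry on the stack is smaller than $m$, so the whole stack is flushed, producing exactly $s(L)$. It is $R$ that is then processed on top of $m$. There $m$ behaves like an empty-stack bottom because it exceeds every entry of $R$.
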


\begin{proof}
    Let $\pi \in \widetilde S_n$. We need to show the part of $s(\pi)$ after $0$ is increasing. Consider the moment immediately after $0$ is popped from the stack when $s$ is applied on $\pi$. Let the remainder of the input permutation be $r = (r_1,\dots,r_i)$ and the elements of the stack from top to bottom be $s = (s_1,\dots,s_j)$. Now $r$ and $s$ are both increasing. If $s_1 < r_1$, then $s_1$ is popped. Otherwise, $s_1 > r_1$, so $r_1$ is pushed in, then immediately popped. In either case, the least element of $r$ and $s$ is added to the end of the output permutation. Iterating $i + j$ times moves the elements of $r$ and $s$ to the end of $s(\pi)$ in increasing order.
\end{proof}

We now find a way to represent the stack-sorting map on $\widetilde S_n$. For fixed $\pi \in \widetilde S_n$, let $\pi = \rho \mathbf 0r$ for $r$ increasing. Note that $\pi$ is sorted if and only if $\rho$ is empty. More generally, $\pi \in \widetilde S_n$ is determined only by $\rho$, therefore, it is only necessary to track $\rho$ when analyzing the action of $s$ on $\pi$.

We will construct the stack-sorting tableau $T_\pi$ corresponding to $\pi \in \widetilde S_n$, which has $\lvert \rho\rvert$ cells and whose entries are exactly those of $\rho$. Set $\rho_i$ to be the portion of $s^{i - 1}(\pi)$ left of $0$.

For a permutation $\pi$, let $M(\pi)$ be the subpermutation of right-to-left maxima of $\pi$. Observe that $M(\rho_i)$ is exactly the set of elements that are in the stack when $0$ is pushed in during the $i$th iteration of $s$. Therefore, the $M(\rho_i)$ are the elements that are moved after $0$, and hence are removed from $\rho_i$, during the $i$th iteration of $s$.

Let the $i$th column of $T_\pi$ consist of the elements of $M(\rho_i)$ in order, with the leftmost element of $M(\rho_i)$ being the topmost entry in the $i$th column of $T_\pi$. For all cells but the first column, we require $c$ to be between $\up(\lft(c))$ and $\lft(c)$ in $s^{i - 2}(\pi)$, which uniquely determines the position of $c$. If $\up(\lft(c))$ is undefined, we just require $c$ to be left of $\lft(c)$. Let $\theta$ send $\pi \in \widetilde S_n$ to its stack-sorting tableau $T_\pi$.

\begin{example}
    For the permutation $\mathbf{8395714260} \in S_9'$, we have
    \begin{align*}
        \rho_1 &= \mathbf{83\textcolor{red}95\textcolor{red}7142\textcolor{red}6} \\
        \rho_2 &= \mathbf{3\textcolor{red}8\textcolor{red}512\textcolor{red}4} \\
        \rho_3 &= \mathbf{\textcolor{red}31\textcolor{red}2} \\
        \rho_4 &= \mathbf{\textcolor{red}1} \\
        \rho_5 &= \varepsilon,
    \end{align*}
    where the red elements of $\rho_i$ are the elements of $M(\rho_i)$. Then $T_\pi$ is the following.
    \[\begin{ytableau}
        9 & 8 & 3 \\
        7 & 5 \\
        6 & 4 & 2 & 1
    \end{ytableau}\]
    The tableau $T_\pi$ contains each element of $\{1,\dots,9\}$ exactly once. Also, $T_\pi$ is antistandard and has width $4 = \langle \pi\rangle$.
\end{example}

\section{Proof of Zhang's Conjecture}
\label{sec:proof}

Let $W'_2(n)$ be the set of $2$-stack-sortable permutations in $S_n'$. Let $X_n'$ be the set of antistandard composition tableaux with $n$ cells and width at most $2$, $X_n$ be the set of standard composition tableaux with $n$ cells and width at most $2$, and $Y_n$ be the set of standard Young tableaux with $n$ cells and width at most $3$. We will show the chain of equalities
\[\lvert W'_2(n)\rvert = \lvert X_n'\rvert = \lvert X_n\rvert = \lvert Y_n\rvert.\]
Conjugation yields a bijection between $Y_n$ and standard Young tableaux with $n$ cells and at most three rows. As $M_n$ counts standard Young tableaux with $n$ cells and at most three rows \cite{motzkinsyt}, this finishes. The proof is divided into a series of claims.
\begin{lemma}
    \label{lem:first}
    $\lvert W'_2(n)\rvert = \lvert X_n'\rvert$.
\end{lemma}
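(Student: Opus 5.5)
By Proposition \ref{pro:theory}, the permutations in $W'_2(n)$ are exactly the preimages under $\theta$ of the tableaux in $\mathcal T_n$ of width at most $2$. Here we use that the width of $\theta(\pi)$ equals $\langle \pi\rangle$, and that $\langle\pi\rangle \ge 1$ for $n\ge 1$; the case $n=0$ is trivial, since both sets have one element. The tableaux of width at most $2$ in $\mathcal T_n$ are precisely the elements of $X_n'$. Hence
\[\lvert W'_2(n)\rvert = \sum_{T \in X_n'} \bigl\lvert \theta^{-1}(T)\bigr\rvert = \sum_{T\in X_n'} \prod_c h(c),\]
where the product runs over the cells $c$ not in the first column of $T$. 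It therefore suffices to show that every such product equals $1$, so that $\theta$ restricts to a bijection $W'_2(n)\to X_n'$.

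For $T \in X_n'$, every cell $c$ outside the first column lies in the second column. Its left neighbor $\lft(c)$ is in the first column and in the same row as $c$. We compute $h(c)$ by counting the cells strictly left of $c$, weakly above $c$, and strictly below $\up(\lft(c))$. Strictly left of $c$ means the first column, so these are the cells of column $1$ in rows weakly above $c$'s row and strictly below the row of $\up(\lft(c))$.

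Because the width is at most $2$, the first column has no gaps: every row has a cell in column $1$. So $\up(\lft(c))$ is the first-column cell in the row directly above $c$'s row, if that row exists. In that case the only qualifying cell is $\lft(c)$ itself, and $h(c)=1$.

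The remaining case is $c$ in the top row, where $\up(\lft(c))$ is undefined and the condition is dropped. The qualifying cells are then the first-column cells weakly above $c$, which is again only $\lft(c)$, so $h(c)=1$. In both cases every factor is $1$, so each fiber of $\theta$ over $X_n'$ is a singleton, which proves the lemma.

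The only delicate point is reading the definition of $\up$ correctly. It is "the cell above in the same column, possibly crossing a gap", and we must confirm that it returns the immediately preceding row in column $1$. The first column of a composition diagram always contains every row, so there are no gaps there and the argument is immediate.
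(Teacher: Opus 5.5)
Your proposal is correct and follows the paper's proof: you use the width-equals-depth property of $\theta$ to identify $W_2'(n)$ with $\theta^{-1}(X_n')$. You then check that every second-column cell has hook-length $1$, because $\lft(c)$ is the only cell that qualifies, so each fiber is a singleton. You are slightly more explicit than the paper in handling the top-row case where $\up(\lft(c))$ is undefined. The aside about $\langle\pi\rangle \ge 1$ is unnecessary but harmless.
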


\begin{lemma}
    \label{lem:second}
    $\lvert X_n'\rvert = \lvert X_n\rvert$.
\end{lemma}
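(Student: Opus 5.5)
The natural bijection is the complement map $\phi\colon T \mapsto T'$, where $T'(c) = n+1-T(c)$ for every cell $c$. This map keeps the shape fixed. So it sends composition tableaux with $n$ cells and width at most $2$ to tableaux of the same kind.

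The key point is that the complement reverses every strict inequality. Suppose the entries along each row of $T$ decrease strictly from left to right, and the entries down each column decrease strictly from top to bottom. Then in $T'$ the same sequences increase strictly.

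We also need the entry set to be preserved. The map $x \mapsto n+1-x$ permutes $\{1,\dots,n\}$, so $T'$ still contains each of $1,\dots,n$ exactly once. Hence $\phi$ maps $X_n'$ into $X_n$.

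The inverse is the same operation, since $n+1-(n+1-x)=x$. Therefore $\phi$ is an involution identifying $X_n'$ with $X_n$, and $\lvert X_n'\rvert = \lvert X_n\rvert$.

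One point needs care: what "column" means in a composition diagram, where a column may contain gaps. The definitions say a tableau is increasing or decreasing when its columns are strictly monotone read top-to-bottom. I will read this as referring to all cells of a column, including across gaps, consistent with how $\up(c)$ is defined. Under either reading, the condition is a set of strict comparisons between pairs of cells determined by the shape alone. The complement reverses each such comparison, so the argument works regardless of which reading is intended.

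I expect no real obstacle. The only thing to check is that the defining conditions depend only on pairwise order relations among entries in the fixed shape, and that is immediate from the definitions.
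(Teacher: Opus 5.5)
Your proposal is correct and is the same argument as the paper's: the complement map $c \mapsto n+1-c$ keeps the shape and reverses every strict comparison, so it is an involution exchanging $X_n'$ and $X_n$. Your remark that this works under either reading of ``column'' in a composition diagram is a sound extra check; the paper leaves that point implicit.
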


\begin{theorem}
    \label{thm:third}
    $\lvert X_n\rvert = \lvert Y_n\rvert$.
\end{theorem}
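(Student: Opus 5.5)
The plan is to reinterpret $X_n$ as a set of partial matchings, and then use the Robinson--Schensted correspondence for involutions to reach $Y_n$.

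\textbf{Step 1: $X_n$ as nonnesting partial matchings.} Take $T \in X_n$. Every row has length $1$ or $2$. Let $a_1,\dots,a_k$ be the first-column entries read top to bottom; they are increasing. For each row of length $2$, pair its first entry $a_i$ with its second entry $b_i$, so $a_i<b_i$. The second-column entries, read top to bottom, are also increasing. Let $P(T)$ be the partial matching of $[n]$ with these arcs $(a_i,b_i)$, and let the entries of the length-$1$ rows be fixed points.

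The conditions on $T$ say exactly two things. First, each arc satisfies left endpoint $<$ right endpoint. Second, whenever $a_i<a_j$ are both matched, $b_i<b_j$; that is, no arc nests inside another. There is no constraint at all on where fixed points sit.

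Conversely, let a nonnesting partial matching of $[n]$ be given. List the left endpoints and the fixed points in increasing order as the first column. Attach each right endpoint to its partner as a second entry. Nonnestingness gives an increasing second column, so the result is standard. Hence $X_n$ is in bijection with nonnesting partial matchings of $[n]$, which we view as involutions $\sigma \in S_n$.

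\textbf{Step 2: nonnesting $\iff$ no decreasing subsequence of length $4$.} I will show that an involution $\sigma$ comes from a nonnesting matching if and only if $\sigma$ has no decreasing subsequence of length $4$.

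The forward implication (a nesting gives a $4321$) is immediate. If arcs $(a,d)$ and $(b,c)$ satisfy $a<b<c<d$, then $\sigma$ restricted to the positions $a,b,c,d$ reads $d,c,b,a$.

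For the converse, take positions $p_1<p_2<p_3<p_4$ with $\sigma(p_1)>\sigma(p_2)>\sigma(p_3)>\sigma(p_4)$, and assume $\sigma$ is nonnesting. A nonnesting involution restricted to any set of positions that is closed under $\sigma$ is again nonnesting. So the plan is a small case analysis on how the $p_i$ pair up: as arcs, as fixed points, or with partners outside the set.

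Two observations drive the cases. Two fixed points $p<q$ always have $\sigma(p)<\sigma(q)$. A descent $\sigma(p)>\sigma(q)$ with $p<q$ forces either $p$ to be a left endpoint whose partner lies beyond $q$, or $q$ to be a right endpoint whose partner lies before $p$. Chasing these constraints across four positions should always produce two nested arcs. This contradicts the assumption, so a nonnesting $\sigma$ has no decreasing subsequence of length $4$.

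I expect this converse direction to be the main obstacle. It is the only place needing genuine combinatorics rather than bookkeeping. If the direct case analysis gets messy, I would instead use the inductive structure: remove the largest element $n$, which is either a fixed point or the right endpoint of the arc with the largest left endpoint.

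\textbf{Step 3: Robinson--Schensted.} The Robinson--Schensted correspondence restricts to a bijection between involutions of $S_n$ and standard Young tableaux with $n$ cells, via $\sigma \mapsto P(\sigma)=Q(\sigma)$. By Schensted's theorem, the number of rows of $P(\sigma)$ equals the length of the longest decreasing subsequence of $\sigma$. By Step 2, nonnesting involutions therefore correspond exactly to standard Young tableaux with at most $3$ rows. Conjugating, these correspond to standard Young tableaux with width at most $3$, which is the set $Y_n$.

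Composing Steps 1 and 3, with Step 2 guaranteeing that the images match, gives the desired bijection and proves $\lvert X_n\rvert=\lvert Y_n\rvert$.
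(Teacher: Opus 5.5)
Your Steps 1 and 3 are sound. A standard composition tableau of width at most $2$ is exactly a nonnesting partial matching of $[n]$: the two-cell rows are the arcs, and the increasing second column says precisely that no arc nests inside another. Robinson--Schensted, with Sch\"utzenberger's symmetry and Schensted's theorem, carries involutions whose longest decreasing subsequence has length at most $3$ bijectively onto standard Young tableaux with at most $3$ rows, and their transposes form $Y_n$.

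The gap is the converse half of Step 2 (nonnesting $\Rightarrow$ no decreasing subsequence of length $4$). You only assert that chasing constraints ``should'' produce a nesting, and the second observation you intend to drive that case analysis is false. Take $\sigma = 3412$, the crossing but not nesting arcs $(1,3)$ and $(2,4)$, with $p=1$ and $q=4$. Then $\sigma(p)=3>2=\sigma(q)$, yet the partner of $p$ is $3<q$ and the partner of $q$ is $2>p$. An inversion from a left endpoint to a right endpoint can also come from two crossing arcs, or from a single arc. Those are exactly the configurations your dichotomy omits, so the case analysis as planned does not go through.

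The missing step has a short pigeonhole proof. Partition $[n]$ into left endpoints $O=\{x:\sigma(x)>x\}$, right endpoints $C=\{x:\sigma(x)<x\}$, and fixed points $F$.
\begin{itemize}
\item On $F$, $\sigma$ is the identity.
\item If $a<a'$ in $O$ had $\sigma(a)>\sigma(a')$, then $a<a'<\sigma(a')<\sigma(a)$ is a nesting.
\item If $b<b'$ in $C$ had $\sigma(b)>\sigma(b')$, then $\sigma(b')<\sigma(b)<b<b'$ is a nesting.
\end{itemize}
So a nonnesting $\sigma$ is increasing on each of $O$, $C$, $F$. A decreasing subsequence therefore meets each class at most once and has length at most $3$.

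With this inserted, your proof is complete, and it takes a genuinely different route from the paper's. The paper never leaves tableaux. It builds an intermediate family $Z_n \supseteq X_n \cup Y_n$ and explicit mutually inverse maps $f$ and $g$ that move one cell into or out of a third column, which requires a fairly delicate case-by-case check of well-definedness. Your argument is shorter and more explanatory: it identifies $X_n$ with nonnesting partial matchings, equivalently $4321$-avoiding involutions. The price is importing Robinson--Schensted and Schensted's theorem.
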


We begin by proving the first two, as they are relatively simple.

\begin{proof}[Proof of Lemma \ref{lem:first}]
    We claim the map $\theta$ of Proposition \ref{pro:theory} is a bijection from $W_2'(n)$ to $X_n'$. By the second part of Proposition \ref{pro:theory}, if $\pi \in W_2'(n)$, then $\theta(\pi)$ is an antistandard composition tableau with $n$ cells and at most $2$ columns. Also, the hook-length of every cell $c$ in the second column of $\theta(\pi)$ is $1$, since the unique cell that is strictly left of $c$, weakly above $c$, and strictly below $\up(\lft(c))$ is the cell immediately to the left of $c$. Therefore, the first part of Proposition \ref{pro:theory} implies the preimage of every $T \in X_n'$ consists of exactly one element. Hence $\theta: W_2'(n) \to X_n'$ is a bijection and $\lvert W_2'(n)\rvert = \lvert X_n'\rvert$, as desired.
\end{proof}

\begin{proof}[Proof of Lemma \ref{lem:second}]
    The map replacing each entry $c$ with $n + 1 - c$ is an involution on the set of tableaux sending $X_n'$ to $X_n$, hence $\lvert X_n'\rvert = \lvert X_n\rvert$.
\end{proof}

We now work towards proving Theorem \ref{thm:third}. Let $T$ be an increasing composition tableau. We define the following if they exist.
\begin{itemize}
    \item Let the \emph{overhang} of $T$ be the highest two-cell row of $T$ that is immediately below a one-cell row. Let the entries of the overhang be $i(T)$ and $j(T)$ with $i(T) < j(T)$.
    \item Let $i'(T)$ be the least cell in the second column that is larger than $i(T)$.
    \item Let $c_3(T)$ be the bottommost cell in the third column. Let $c_2(T)$ be the largest cell smaller than $c_3(T)$ in the second column and $c_1(T)$ be the largest cell smaller than $c_2(T)$ in the first column. Let $c_2'(T)$ be the bottommost cell in the second column weakly above $c_1$.
    \item If $T$ has an overhang, let $d_2(T)$ be the largest cell in the second column smaller than $j$, if it exists. Otherwise, let $d_2(T)$ be the largest cell in the second column, if it exists.
\end{itemize}
We abbreviate $i(T)$, $j(T)$, $i'(T)$, $c_1(T)$, $c_2(T)$, $c_3(T)$, and $d_2(T)$ to $i$, $j$, $i'$, $c_1$, $c_2$, $c_3$, and $d_2$, respectively, if $T$ is clear from context.

A standard composition tableau $T$ is said to be \emph{intermediate} if all of the following conditions hold.
\begin{itemize}
    \item \textbf{Width}: $T$ has width at most $3$.
    \item \textbf{Shape}: there is no three-cell row of $T$ below a two-cell or one-cell row of $T$.
    \item \textbf{Position}: $T$ has width at most $2$, $T$ has no overhang, (P1) both $i > d_2$ and $j > c_3$, or (P2) $c_3 < i' < j$.
\end{itemize}
Note that the conditions (P1) and (P2) are mutually exclusive, since if $i' < j$, we have $i' \le d_2$ since the second column is increasing. Yet $i' > i$, so this would imply $i < d_2$. Also, to check for the position condition, showing $i' > c_3$ is sufficient. If $c_3$ does not exist, there is nothing to prove. Otherwise, $j > i$, so $i' \le j$, but if $i' = j$, we have $i > d_2$ and $j = i' > c_3$.

Let $Z_n$ be the set of all intermediate tableaux on $n$ cells. Observe that $X_n \subseteq Z_n$, since each element of $X_n$ has width at most $2$. Also, $Y_n \subseteq Z_n$, since each element of $Y_n$ has a partition shape and hence no overhang. We will construct a pair of inverse bijections $f: Z_n \setminus Y_n \to Z_n \setminus X_n$ and $g: Z_n \setminus X_n \to Z_n \setminus Y_n$.

Let $T \in Z_n \setminus Y_n$. If $T$ has no overhang, then there is no three-cell row of $T$ below a two-cell or one-cell row of $T$, by the shape condition, and there is no two-cell row of $T$ below a one-cell row of $T$. Therefore, $T$ has partition shape and $T \in Y_n$, which is impossible since $T \in Z_n \setminus Y_n$. Hence $T$ must have an overhang. We construct $f(T)$ by cases. We demonstrate each of these cases in Example \ref{exa:f}.
\begin{enumerate}[(a)]
    \item Suppose $i' = j$. Then move $i$ to the highest available position in the second column and $j$ to the highest available position in the third column.
    \item Suppose $i' < j$. Then move $i'$ to the highest available position in the third column and $i$ to the original position of $i'$. Then move $j$ to the highest available position in the second column.
\end{enumerate}
Note that if $T$ has width $3$, we use case (a) of $f$ if and only if (P1) holds and case (b) of $f$ if and only if (P2) holds.

\begin{example}
    \label{exa:f}
    We apply $f$ three times on the tableau below in sequence, where we use cases (a), (b), and (a) of $f$, in that order. The overhangs are highlighted in green.
    \[\begin{ytableau}
    1 \\ *(LimeGreen) 2 & *(LimeGreen) 5 \\ 3 & 8 \\ 4 & 9 \\ 6 \\ 7 & 11 \\ 10 \\ 12 & 13
    \end{ytableau} \quad \xrightarrow f \quad \begin{ytableau}
    1 & 2 & 5 \\ 3 & 8 \\ 4 & 9 \\ 6 \\ *(LimeGreen) 7 & *(LimeGreen) 11 \\ 10 \\ 12 & 13
    \end{ytableau} \quad \xrightarrow f \quad \begin{ytableau}
    1 & 2 & 5 \\ 3 & 7 & 8 \\ 4 & 9 \\ 6 & 11 \\ 10 \\ *(LimeGreen) 12 & *(LimeGreen) 13
    \end{ytableau} \quad \xrightarrow f \quad \begin{ytableau}
    1 & 2 & 5 \\ 3 & 7 & 8 \\ 4 & 9 & 13 \\ 6 & 11 \\ 10 & 12
    \end{ytableau}\]
\end{example}

We now check $f$ is well-defined. Let $T \in Z_n \setminus Y_n$
and let $F = f(T)$. We first check that $F$ is increasing.
The first column of $F$ is increasing since it is just the first column of $T$ with
$i(T)$ removed. In the second column, $i'(T)$ is replaced by
$i(T)$, while all other entries retain their order, so
the second column of $F$ is increasing.
The third column is obtained by appending $i'(T)$.
If $T$ has width $3$, $i'(T) > c_3(T)$, so the third column of $F$ is increasing.
Also, the rows of $F$ are increasing, since the cell immediately to the left of any second- or third-column cell in $F$ is both weakly left of it and weakly above it in $T$.

Now we verify $F$ is intermediate.
The width condition holds, as no cells are inserted into any columns beyond the third, and the shape condition holds, since the new cell in the third column is in the lowest available position.
We now verify the position condition. If $F$ has no overhang, there
is nothing to prove, so suppose otherwise. Now $i(F)$ was below $i(T)$ in the first column of $T$, so $i(F)>i(T)$. Also, every second-column entry of $T$ smaller than $i'(T)$ is smaller than
$i(T)$, so every second-column entry of $F$ greater than $i(F)$ is greater than $i'(T)$. Hence
\[
    i'(F)>i'(T)=c_3(F),
\]
so the position condition holds. Since the third column of $F$ is nonempty,
$f:Z_n\setminus Y_n\to Z_n\setminus X_n$ is well-defined.

We now construct the inverse $g: Z_n \setminus X_n \to Z_n \setminus Y_n$ of $f$. For any $T \in Z_n \setminus X_n$, $T$ has width $3$, so $c_3$ and $d_2$ exist. We construct $g(T)$ by cases. We demonstrate each of these cases in Example \ref{exa:g}.
\begin{enumerate}[(a)]
    \item If $d_2 < c_3$, move $d_2$ into the first column such that the first column stays increasing, creating a new row, then move $c_3$ to the position immediately to the right of $d_2$.
    \item Otherwise, if $c_1$ and $c_2$ are in the same row, insert $c_2$ into the first column such that it stays increasing (which is directly below $c_1$ by assumption), then move $c_3$ immediately to the right of $c_2$.
    \item Otherwise, insert $c_2$ into the first column such that it stays increasing, move $c_2'$ directly to the right of $c_2$, then move $c_3$ to the old position of $c_2$.
\end{enumerate}

\begin{example}
    \label{exa:g}
    We apply $g$ three times on the tableau below in sequence, where we use cases (a), (c), and (b) of $g$, in that order.
    \[\begin{ytableau}
    1 & 2 & 5 \\ 3 & 7 & 8 \\ 4 & 9 & 13 \\ 6 & 11 \\ 10 & 12
    \end{ytableau} \quad \xrightarrow g \quad \begin{ytableau}
    1 & 2 & 5 \\ 3 & 7 & 8 \\ 4 & 9 \\ 6 & 11 \\ 10 \\ 12 & 13
    \end{ytableau} \quad \xrightarrow g \quad \begin{ytableau}
    1 & 2 & 5 \\ 3 & 8 \\ 4 & 9 \\ 6 \\ 7 & 11 \\ 10 \\ 12 & 13
    \end{ytableau} \quad \xrightarrow g \quad \begin{ytableau}
    1 \\ 2 & 5 \\ 3 & 8 \\ 4 & 9 \\ 6 \\ 7 & 11 \\ 10 \\ 12 & 13
    \end{ytableau}\]
    Note that this is exactly the reverse of the sequence in Example \ref{exa:f}. This holds since $f$ and $g$ are inverses, which we will show in Lemma \ref{lem:inverses}.
\end{example}

We now check that $g$ is well-defined. Let $T \in Z_n \setminus X_n$ and let $G = g(T)$. Observe that $G \not\in Y_n$, as $G$ always has an overhang. We check this, working by the cases of $g$.
\begin{enumerate}[(a)]
    \item Case (a). Then $i(G) = d_2(T)$ and $j(G) = c_3(T)$.
    \item Case (b). Then $i(G) = c_2(T)$ and $j(G) = c_3(T)$.
    \item Case (c). We claim that $c_2'(T) \le d_2(T)$. If $c_2'(T) > d_2(T)$, $c_1(T)$ is weakly lower than $i(T)$, hence $i(T) \le c_1(T) < c_2(T) \le d_2(T)$, so (P1) does not hold for $T$. Also, as $c_2(T) > i(T)$, we have $i'(T) \le c_2(T) < c_3(T)$, so (P2) does not hold for $T$ either, impossible. Hence in $G$, the row above the new overhang $i(G) = c_2(T)$ and $j(G) = c_2'(T)$ has the one cell $c_1(T)$.
\end{enumerate}
We now check that $G$ is increasing. The fact that the first column and rows of $G$ are increasing follows directly from the definition of $g$. We now check that the second column is increasing, working by the cases of $g$. In the following discussion, any cell names such as $c_3$ are taken with respect to $T$.
\begin{enumerate}[(a)]
    \item Case (a). If $T$ has no overhang, $c_3$ is the bottommost cell of the second column of $G$, so the second column stays increasing as $c_3 > d_2$. Suppose now $T$ has an overhang. (P2) for $T$ must not hold, since then $i' \le d_2$, so
    \[c_3 < i' \le d_2 < c_3.\]
    Since $T$ is intermediate, this implies (P1), or $i > d_2$ and $j > c_3$. Then $c_3$ is inserted into the second column above $j$, so the second column stays increasing as $j > c_3 > d_2$.
    \item Case (b). The action of $g$ on the second column here is to replace $c_2$ with $c_3$. We see $c_2 < c_3$, and if there are any cells below $c_2$ in the second column, $c_3$ is less than them, by the definition of $c_2$. Hence the second column remains increasing.
    \item Case (c). We have $c_2' > c_2$, since $c_1$ and $c_2$ are not in the same row and $c_2' \le d_2$. Now the action of $g$ on the order of the second column only replaces $c_2$ with $c_3$, which finishes since (as in the previous case) replacing $c_2$ with $c_3$ keeps the second column increasing.
\end{enumerate}
Now we check the conditions for intermediate tableaux. The width
condition is immediate. The definitions imply that $c_2$ and $d_2$
are weakly below $c_3$, and $c'_2$ is weakly below $c_2$. Thus all
changes occur in the original row of $c_3$ or below it, and the new
row is inserted below that row. Since $c_3$ is removed, the remaining
three-cell rows are above every shorter row. Hence the shape
condition holds.

We now verify the position condition for $G$, working by the cases of $g$. Suppose $c_3(G)$ exists, otherwise, there is nothing to prove. In the following discussion, $\up$ is taken with respect to $T$.
\begin{enumerate}[(a)]
    \item Case (a). Then (P1) holds for $G$, since
    \[i(G) = d_2(T) > \up(d_2(T)) = d_2(G) \quad \text{and} \quad j(G) = c_3(T) > \up(c_3(T)) = c_3(G).\]
    \item Case (b). Then (P1) holds for $G$, since
    \[i(G) = c_2(T) > \up(c_2(T)) = d_2(G) \quad \text{and} \quad j(G) = c_3(T) > \up(c_3(T)) = c_3(G).\]
    \item Case (c). Then $i'(G) = c_3(T) > \up(c_3(T)) = c_3(G)$, which suffices.
\end{enumerate}
Therefore, $g: Z_n \setminus X_n \to Z_n \setminus Y_n$ is well-defined.

\begin{lemma}
    \label{lem:inverses}
    The maps $f$ and $g$ are inverses.
\end{lemma}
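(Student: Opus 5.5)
We will show $g \circ f = \id$ on $Z_n \setminus Y_n$ and $f \circ g = \id$ on $Z_n \setminus X_n$. Since both maps are already known to be well-defined between the stated sets, these two identities give Lemma~\ref{lem:inverses}. Each identity is checked case by case. The key step is to compute, for $F = f(T)$, the cells $c_3(F)$, $d_2(F)$, $c_2(F)$, $c_1(F)$, $c_2'(F)$ in terms of $i, j, i'$ of $T$. Symmetrically, for $G = g(T)$ we compute $i(G)$, $j(G)$, $i'(G)$ in terms of the cells of $T$. Then we observe that the moves undo each other.

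For $g\circ f$, let $T \in Z_n\setminus Y_n$ with overhang entries $i<j$, and set $F=f(T)$. In both cases of $f$ the newly placed third-column cell goes to the lowest available position. This cell is $j$ in case (a) and $i'$ in case (b), and it becomes $c_3(F)$.

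\textbf{Case (a), $i'=j$.} The entry $i$ sits in the second column of $F$ in the row of $j$'s new position, namely immediately above the cell where $j$ lands. The second column of $T$ below the overhang row consists of entries exceeding $j$. Hence $d_2(F)$, the largest second-column cell of $F$ below $c_3(F)=j$, should be $i$. I expect this identification to need the most care, because $d_2(F)$ is defined relative to the overhang of $F$ (or its absence), which is $j(F)$ rather than $c_3(F)$. I plan to use the fact, established in the well-definedness check, that $i'(F)>c_3(F)$; this forces every relevant second-column entry to compare correctly with $j$. Once $d_2(F)=i<j=c_3(F)$, case (a) of $g$ applies. It reinserts $i$ into the first column at its original place, recreating the overhang row, and puts $j$ to its right, which recovers $T$.

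\textbf{Case (b), $i'<j$.} Here $c_3(F)=i'$, and $c_2(F)$ is the largest second-column entry below $i'$, namely $i$, sitting in $i'$'s old row. The row of $j$'s new position in the second column is the row of $c_1(F)$. We then show $d_2(F)\ge c_3(F)$, so case (a) of $g$ is excluded, and that $c_1(F),c_2(F)$ lie in different rows with $c_2'(F)=j$. Case (c) of $g$ then reinserts $i$ into the first column, moves $j$ beside it, and returns $i'$ to $i$'s old position, which is $T$ again.

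For $f\circ g$, let $T\in Z_n\setminus X_n$ and $G=g(T)$. The well-definedness check already identified the overhang of $G$ in each case: $(d_2,c_3)$ in case (a), $(c_2,c_3)$ in case (b), and $(c_2,c_2')$ in case (c). It remains to compute $i'(G)$. In cases (a) and (b), $c_3$ is the least second-column entry of $G$ above $i(G)$, so $i'(G)=j(G)$ and case (a) of $f$ applies; it moves both cells back to their highest available positions, which are exactly their old positions. In case (c), $i'(G)=c_3(T)<c_2'(T)=j(G)$, so case (b) of $f$ applies and reverses the three moves. The main obstacle throughout is the positional bookkeeping. The "highest available position" in $f$ must coincide with the position $g$ vacated, and conversely. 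I will argue this uniformly from the shape condition and the fact that all changes occur at or below the row of $c_3$, as noted in the well-definedness check.
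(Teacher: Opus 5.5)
Your overall plan is the paper's: compute the statistics of $f(T)$ and $g(T)$ case by case and match cases. Your $f\circ g$ direction and your treatment of case (b) of $f$ are essentially the paper's argument.

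\textbf{The gap.} The step you flagged in case (a) of $f$ is false as stated. $d_2(F)$ need not equal $i$, and case (a) of $g$ need not be the case that fires. What you computed is $c_2(F)$, the largest second-column cell below $c_3(F)=j$, which is indeed $i$. But $d_2(F)$ is measured against $j(F)$, or is the largest second-column entry when $F$ has no overhang. For instance, let $T$ have rows $(1),(2,3),(4,5)$. Then $i=2$ and $j=i'=3$, and $F=f(T)$ has rows $(1,2,3),(4,5)$. Since $F$ has no overhang, $d_2(F)=5>3=c_3(F)$, so $g$ uses case (b). Appending rows $(6),(7,8)$ to $T$ gives an $F$ that does have an overhang, with $j(F)=8$ and $i'(F)=8>c_3(F)$, yet still $d_2(F)=5>c_3(F)$. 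So the inequality $i'(F)>c_3(F)$ you planned to use cannot rescue the claim. The first step of Example~\ref{exa:f} is also of this type, and Example~\ref{exa:g} undoes it by case (b) of $g$.

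\textbf{The missing sub-case split.} The paper handles this by splitting case (a) of $f$. Suppose $T$ has exactly one one-cell row above its overhang, and the row directly below the overhang has two cells; let $x$ be that row's second-column entry.
\begin{itemize}
\item In $F$, the rows from the top through this row have weakly decreasing lengths, because the former one-cell row now holds $i$. So any overhang of $F$ lies strictly below it, giving $c_3(F)=j<x\le d_2(F)$. Case (a) of $g$ is therefore excluded.
\item Then $c_2(F)=i$, and $c_1(F)$ is the entry of that single one-cell row, which in $F$ lies in the same row as $i$. So case (b) of $g$ applies: it reinserts $i$ directly below $c_1(F)$ with $j$ to its right, recovering $T$.
\item In every other configuration, the rows of $F$ strictly between $i$'s new row and the overhang of $F$ (or the bottom of $F$) are one-cell rows. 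Hence $d_2(F)=i$, and your case (a) argument goes through.
\end{itemize}

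\textbf{Smaller inaccuracies.} In case (a) of $f$, $j$ lands in the topmost two-cell row above the overhang. That is $i$'s new row only when $T$ has no two-cell rows above its one-cell rows. In case (b), $j$ lands in the topmost one-cell row above the overhang, whereas $c_1(F)$ sits in the bottommost one. They share a row only when there is exactly one such row. Your conclusion $c_2'(F)=j$ is unaffected, since $j$ is still the bottommost second-column cell weakly above $c_1(F)$.
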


\begin{proof}
    We need to show $f \circ g = g \circ f = \id$. We first show $f \circ g = \id$. Let $T \in Z_n \setminus X_n$. Let $F = f(T)$ and $G = g(T)$. We work via cases of $g$.
    \begin{enumerate}[(a)]
        \item Case (a). Then $i(G) = d_2(T)$ and $j(G) = c_3(T)$. Then case (a) of $f$ reverses the application of $g$ to $T$.
        \item Case (b). Similar to case (a), but $i(G) = c_2(T)$ instead of $d_2(T)$. Case (a) of $f$ reverses the application of $g$ to $T$ again.
        \item Case (c). Then $i(G) = c_2(T)$, $j(G) = c_2'(T)$, and $i'(G) = c_3(T)$. Since $i'(G) < j(G)$, case (b) of $f$ reverses the application of $g$ to $T$.
    \end{enumerate}
    Now we show $g \circ f = \id$. Let $T \in Z_n \setminus Y_n$. We work via cases of $f$.
    \begin{enumerate}[(a)]
        \item Case (a). Suppose there is exactly one $1$-cell row above the overhang of $T$, and the row immediately below the overhang of $T$ has $2$ cells. Then $c_1(F)$ is the cell above $i(T)$ in $T$, $c_2(F) = i(T)$, and $c_3(F) = j(T)$. Hence $c_1$ and $c_2$ are in the same row, and applying case (b) of $g$ reverses the application of $f$ to $T$. Otherwise, $d_2(F) = i(T)$ and $c_3(F) = j(T)$, and case (a) of $g$ reverses the application of $f$ to $T$.
        \item Case (b). Then $c_3(F) = i'(T)$, $c_2(F) = i(T)$, $c_1(F)$ is the cell immediately above $i$ in $T$, and $c_2'(F) = j(T)$. Now case (c) of $g$ reverses the application of $f$ to $T$. \qedhere
    \end{enumerate}
\end{proof}

We are now ready to prove Theorem \ref{thm:third}.
\begin{proof}[Proof of Theorem \ref{thm:third}]
    The well-defined maps $f: Z_n \setminus Y_n \to Z_n \setminus X_n$ and $g: Z_n \setminus X_n \to Z_n \setminus Y_n$ satisfy $f \circ g = g \circ f = \id$, hence they are bijections. Hence $\lvert Z_n \setminus X_n\rvert = \lvert Z_n \setminus Y_n\rvert$. As $X_n,Y_n \subseteq Z_n$, this implies $\lvert Z_n\rvert - \lvert X_n\rvert = \lvert Z_n\rvert - \lvert Y_n\rvert$, or $\lvert X_n\rvert = \lvert Y_n\rvert$, as desired.
\end{proof}

\begin{remark*}
    Let $g': Y_n \to X_n$ be defined by $g'(T) = g^{k_T}(T)$ for all $T$, where $k_T$ is the number of cells in the third column of $T$. Each application of $g$ decreases the number of cells in the third column by $1$, so $g'$ is well-defined. Similarly, let $f': X_n \to Y_n$ be the map sending $T$ to the tableaux obtained by repeatedly applying $f$ to $T$ until the result lies in $Y_n$. This process terminates, since each application increases the number of cells in the third column. As $f$ and $g$ are inverses, $f'$ and $g'$ are inverses and hence bijections. Therefore, $f'$ and $g'$ are explicit bijections between $X_n$ and $Y_n$.
\end{remark*}

We may now prove Theorem \ref{thm:motzkin}.

\begin{proof}[Proof of Theorem \ref{thm:motzkin}]
    \[\lvert W_2'(n)\rvert \stackrel{\ref{lem:first}}= \lvert X'_n\rvert \stackrel{\ref{lem:second}}= \lvert X_n\rvert \stackrel{\ref{thm:third}}= \lvert Y_n\rvert \stackrel{\text{\cite{motzkinsyt}}}= M_n. \qedhere\]
\end{proof}

\section*{Acknowledgements}

This project was completed at the Research Science Institute (RSI) at MIT by the second author under the mentorship of the first author. We thank AnaMar\'ia Perez, Tanya Khovanova, David Jerison, and Jonathan Bloom for their invaluable feedback. We thank Colin Defant for help formulating this project. We thank Jerry Zhang for useful insights on his stack-sorting tableaux. We thank MIT and the Center for Excellence in Education for making RSI possible. The second author thanks Joseph Mandarino for his sponsorship, making his attendance at RSI possible.

\bibliographystyle{alpha}
\bibliography{references}

\smallskip

\noindent
Ryota Inagaki \\
\textsc{
Department of Mathematics, Massachusetts Institute of Technology\\
77 Massachusetts Avenue, Building 2, Cambridge, Massachusetts, U.S.A. 02139}\\
\textit{E-mail address: }\texttt{inaga270@mit.edu}
\medskip

\noindent
Michael Luo \\
\textsc{
Minnetonka High School \\
18301 MN-7, Minnetonka, Minnesota, U.S.A.  55345}\\
\textit{E-mail address: }\texttt{luomichaelcunhao@gmail.com}
\medskip

\end{document}